\numberwithin{equation}{section}
\theoremstyle{plain}
\newtheorem{theorem}{Theorem}
\newtheorem{theoA}{\bf Theorem A}
\newtheorem{propos}{\bf Proposition}
\theoremstyle{definition}
\newtheorem{definition}{Definition}
\newtheorem{remark}{Remark}
\newtheorem{example}{Example}
\newcommand{\e}{\varepsilon}
\newcommand{\leftd}{\text{\tiny\rm left}}
\newcommand{\rightd}{\text{\tiny\rm right}}
\renewcommand{\d}{\,{\rm d}}
\def\RR{\mathbb R}
\def\CC{\mathbb C}
\def\NN{\mathbb N}
\DeclareMathOperator{\spr}{sp}
\DeclareMathOperator{\clos}{clos}
\DeclareMathOperator{\Int}{int}
\DeclareMathOperator{\Hol}{Hol}
\DeclareMathOperator{\Zero}{Zero}
\DeclareMathOperator{\sbh}{sbh}
\DeclareMathOperator{\im}{im}
\DeclareMathOperator{\supp}{supp}
\author{B.\,N.\,Khabibullin, N.\,R.\,Tamindarova}
\begin{document}

\maketit

\abstract{Let $m,n\geq 1$ are integers and $D$ be a domain in the complex plane $\CC$ or in the $m$-dimensional real space $\RR^m$. We build positive subharmonic functions on $D$ vanishing on the boundary $\partial D$ of domain $D$.
	 We use such (test) functions to study the distribution of zero sets of holomorphic functions $f$ on $D\subset \CC^n$  with restrictions on the growth of $f$ near the boundary $\partial D$.} \notes{0}
{\subclass{31A05, 31B05, 32A60, 30F45}
\keywords{subharmonic function, holomorphic function, zero set, Riesz measure, convex function}
\thank{ Partially supported by Russian Foundation for Basic Research, Grant 16--01--00024}}

\section{Motivation and statement of the problem}\label{Spsm}
\subsection{Notations, definitions, and agreements}\label{nda} We use an information and definitions from \cite{HK}--\cite{H}.
As usual, $\mathbb N:=\{1,2, \dots\}$, $\mathbb R$ and $\mathbb C$ are the sets of all natural, real and complex numbers, resp. 
We set  
\begin{subequations}\label{df:R}
	\begin{align}
	\RR_{-\infty}:=\{-\infty\}\cup \RR,\; 	\RR_{+\infty}&:=\RR\cup \{+\infty\}, 
	\, \RR_{\pm\infty}:=\RR_{-\infty}\cup \RR_{+\infty},
	\tag{\ref{df:R}r}\label{df:Rr}\\
	\RR^+:= \{x\in \RR\colon x\geq 0\},\; \RR_*^+&:=\RR^+\setminus \{0\}, \; \RR_{+\infty}^+:= \RR^+\cup \{+\infty\},
	\tag{\ref{df:R}p}\label{df:R+}
	\end{align}
 \end{subequations} 
where the usual  order relation $\leq$ on $\RR$  is complemented by the inequalities
$-\infty \leq x\leq +\infty$ for all $x\in \RR_{\pm\infty}$. 
Let $f\colon X\to Y$ be a function. For $Y\subset \RR_{\pm\infty}$, $g\colon X_1\to \RR_{\pm\infty}$ and $S\subset X\cup X_1$, we write ``$f = g$ {\it on\/} $S$\,'' or  ``$f \leq g$ {\it on\/} $S$\,'' if $f(x)= g(x)$ or $f(x)\leq g(x)$ for all $x\in S$ respectively.

Let $m\in \NN$. Denote by $\mathbb R^m$ the {\it  $m$-dimensional Euclidian real  space.\/} Then $\mathbb R^m_{\infty}
:=\mathbb R^m \cup \{\infty\}$ is the {\it  Alexandroff\/} ($\Leftrightarrow$one-point) {\it compactification of\/} $\mathbb R^m$.
Given a subset $S$ of $\mathbb R^m$ (or $\mathbb R^m_{\infty}$), the closure $\clos S$, the interior $\Int S$  and the boundary $\partial S$ will always be taken relative $\mathbb R^m_{\infty}$.

 Let $S_0\subset S\subset \mathbb R^m_{\infty}$. If  the  closure $\clos S_0$ 
is a compact subset of $S$ in the topology induced on $S$  from $\mathbb R^m_{\infty}$, then
we write $S_0\Subset S$. An open connected (sub-)set of $\RR^m$ is a {\it (sub-)domain}  of $\RR^m_{\infty}$.
Given $x\in \RR^m$ and $r\overset{\eqref{df:R}}{\in} \RR_{+\infty}^+$, we set 
$B(x,r):=\{x'\in \RR^m \colon |x'-x|<r\}$,
where $|\cdot |$ is the Euclidean norm  on  $ \RR^m$, $|\infty|:=+\infty$; $B(r):=B(0,r)$. 
Besides, $B(\infty,r):=\{x\in \RR^m \colon |x|>1/r\}$, $\overline{B}(x,r):=\clos B(x,r)$
for $r>0$, but $\overline{B}(x,0):=\{x\}$ and $\overline{B}(+\infty):=\RR^m_{\infty}$.  

Let $A, B$ are sets, and $A\subset B$. The set $A$   is a {\it non-trivial subset\/} of the set $B$ if the subset $A\subset B$ is non-empty ($A\neq \varnothing$) and   {\it proper\/} ($A\neq B$).

We understand always the {\it ``positivity''\/} or {\it ``positive''\/} as $\geq 0$, where  the symbol $0$ denotes the number zero, the zero function, the zero measure, etc. So, a function 	$	f\colon X\to R\overset{\eqref{df:R}}{\subset} \RR_{\pm\infty}$ 
is positive on $X$ if 	$f(x)\geq 0$ for all $x\in X$. In such case we write ``$f\geq 0$ {\it on\/} $X$''.

The operation of superposition of functions denoted by $\circ$.

By $\mathcal M^+(S)$ denote the class of all Borel positive measures on $S$. 

Let $\mathcal O$ be a non-trivial  open subset of $\mathbb R^m _{\infty}$.
 We denote by $\sbh (\mathcal O)$ the class of  all subharmonic functions $u\colon \mathcal O\to \RR_{-\infty}$ on $\mathcal O$ for $m\geq 2$, and all (local) convex functions  $u\colon \mathcal O\to \RR_{-\infty}$ on $\mathcal O$ for $m=1$. 
 By $\star\colon \RR_{\infty}^m\to \RR_{\infty}^m$ denote the  inversion in the unit  sphere $\partial B(0,1)$:
 \begin{equation}\label{stK}
 \star \colon x\mapsto x^{\star}:= \begin{cases}
 0\quad&\text{for $x=\infty$},\\
 \frac{1}{|x|^2}\,x\quad&\text{for $x\neq 0,\infty$},\\
 \infty\quad&\text{for $x=0$}.
 \end{cases}
 \end{equation}
 A function $u$ is subharmonic on a neighbourhood of $\infty \in R_{\infty}^m$ if its {\it Kelvin transform\/}
 \begin{equation}\label{stK+}
 u^{\star}(x^{\star})=|x|^{m-2}u(x), \quad x ^{\star}\in \mathcal O^{\star}:=\{x^{\star}\colon x\in
  \mathcal O\},
 \end{equation}
  is subharmonic on a neighbourhood of $0$.
  The class $\sbh (\mathcal O)$  contains the function 
 $\boldsymbol{-\infty}\colon x\mapsto -\infty$, $x\in \mathcal O$ (identically equal to $-\infty$); 
 \begin{equation*}
\sbh^+(\mathcal O):=\{u\in \sbh (\mathcal O)\colon u\geq 0 \text{ on $\mathcal O$}\},\quad  \sbh_*(\mathcal O):=\sbh\,(\mathcal O)\setminus \{\boldsymbol{-\infty}\}.
 \end{equation*}
  For $u\in \sbh_*(\mathcal O)$, the {\it  Riesz measure of\/} $u$ is the  Borel  (or Radon \cite[A.3]{R})  positive measure 
\begin{equation}\label{df:cm}
\nu_u:= c_m \,\Delta u\in \mathcal M^+(\mathcal O),  \quad c_m:=\frac{\Gamma(m/2)}{2\pi^{m/2}\max\bigl\{1, (m-2)\bigr\}}\,,
\end{equation}
where $\Delta$ is  the {\it Laplace operator\/}  acting in the sense of distribution theory, and $\Gamma$ is the gamma function. In particular,  $\nu_u(S)<+\infty$ for each subset $S\Subset \mathcal O$.
By definition, $\nu_{\boldsymbol{-\infty}}(S):=+\infty$ for all $S\subset \mathcal O$. 

\subsection{Test functions} Subjects of our investigation are presented by
\begin{definition}\label{dftf} Throughout what follows 
	$m,n\in \NN$ and  $ \varnothing \neq K=\clos K\Subset D\subset \RR_{\infty}^m$, where 
	 	$D$ is {\it a subdomain in\/} $\mathbb R^m_{\infty}$ or  $\CC^n_{\infty}$.
	A function $v\in \sbh^+ (D\setminus K)$ is 
	a {\it  test function for\/ $D$ outside of\/ $K$} if 
\begin{subequations}\label{v0l}
\begin{align}
\lim_{D\ni x'\to x} v(x')&=0 \quad \text{for each  $x\in \partial D$}
\tag{\ref{v0l}o}\label{v0lo}
\\
\text{and}\quad \sup_{x\in D\setminus K}v(x)&<+\infty.
\tag{\ref{v0l}s}\label{v0ls}
\end{align}
\end{subequations}
	The class of  test functions for $D$ outside of $K$ is denoted by $\sbh_0^+(D\setminus K)$.
\end{definition}

We give {\it elementary properties\/} and simple examples of test functions.

\begin{enumerate}[{\bf t}1.]\label{pro:tf}
\item\label{pro:tf1} The condition \eqref{v0lo}   can be replaced by the condition:
{\it for each number  $\e\in \RR^+_*$ there is a subset $S_{\e}\Subset D$ such that $0\leq v<\e$ on $D\setminus S_{\e}$.}  
\item \label{pro:tf2} If a function $v\in \sbh_0^+(D\setminus K)$  is continued (extended) by zero  as
\begin{equation}\label{d:exvv}
v(x):=\begin{cases}
v(x), \quad &\text{for  $x\in D\setminus K$,}\\
0, \quad & \text{for  $x\in \RR_{\infty}^m\setminus D$,}		
\end{cases}
\end{equation}
then the extended function $v$ is a subharmonic function on $\RR_{\infty}^m\setminus K$ and 
$v\in \sbh_0^+(\RR_{\infty}^m\setminus K)$. 
\item\label{pro:tf3} 
If $v\in \sbh_0^+(\RR_{\infty}^m\setminus K)$ and $v=0$ on $\RR_{\infty}^m\setminus D$, then  $v\in \sbh_0^+(D\setminus K)$. 
Throughout what follows we identify a test function $v\in \sbh_0^+(D\setminus K)$ and its continuation \eqref{d:exvv} of  the class 
$\sbh_0^+(\RR_{\infty}^m\setminus K)$.
\item\label{pro:tf4} The condition \eqref{v0ls} can be replaced by the condition
\begin{equation*} 
\sup_{x\in \partial  S}\limsup_{D\setminus {S} \ni x'\to x} v(x')<+\infty \;\text{(the maximum Principle for
	$\sbh (\RR_{\infty}^m\setminus K)$)}.
\end{equation*}  
\item\label{pro:tf5} If  $v\in \sbh_0^+(D\setminus K)\subset \sbh_0^+(\RR_{\infty}^m\setminus K)$, then its Riesz measure $\nu_v$ belongs to $\mathcal M^+(\clos D\setminus K)$. 
\end{enumerate}

\begin{example}\label{exg} Let $D\subset \RR^m_{\infty}$ be a domain, $\widetilde D\subset D$ a regular (for the Dirichlet problem) subdomain  of $D$, and $\exists\, x_0\in \widetilde D$. Then {\it the extended Green's function\/ $g_{\widetilde D}(\cdot, x_0)$ for $\widetilde D$ with pole at\/} $x_0$ is a test function 
from the class $\sbh_0^+\bigl(D\setminus \{x_0\}\bigr)$. Its Riesz measure  is {\it the harmonic	measure\/
$\omega_{\widetilde D}(x_0,\cdot)$ for\/ $\widetilde D$ at\/} $x_0$ such that 
\begin{equation*}
 \supp \omega_{\widetilde D}(x_0,\cdot)\in \mathcal M^+(\partial \widetilde D)\subset \mathcal M^+(\clos D), \quad
 \omega_{\widetilde D}(x_0,\partial \widetilde D)=1.
 \end{equation*}
 \end{example}

\subsection{Holomorphic functions} Let $n\in \NN$. Denote by $\mathbb C^n $ the {\it  $n$-dim\-e\-n\-s\-i\-o\-n\-al Euclidian complex  space.\/} Then $\mathbb C^n _{\infty}:=\mathbb C^n \cup \{\infty\}$ is the {\it  Alexandroff\/} ($\Leftrightarrow$one-point) {\it compactification of\/} $\mathbb C^n$.  If it is necessary,  
we identify $\mathbb C^n $ (or $\mathbb C^n _{\infty}$) with $\mathbb R^{2n}$ (or $\mathbb R^{2n}_{\infty}$). Let $\mathcal O$ be a non-trivial  open subset of $\mathbb C^n_{\infty}$. We denote by $\Hol (\mathcal O)$ and $\sbh (\mathcal O)$ the class of holomorphic  and subharmonic functions on $\mathcal O$, resp. For $u\in \sbh_*(\mathcal O)$, the {\it  Riesz measure of\/} $u$ is the  Borel (and  the Radon) positive measure 
\begin{equation}\label{df:cmf}
\nu_u\overset{\eqref{df:cm}}{:=}c_{2n} \, \Delta u\in \mathcal M^+(\mathcal O), \quad c_{2n}\overset{\eqref{df:cm}}{:=}\frac{(n-1)!}{2\pi^n \max\{1,2n-2\}}\,.
\end{equation}
For $k\in \{0\}\cup \NN$, we denote by $\sigma_k$ the {\it $k$-dimensional surface\/} ($\Leftrightarrow$Hausdorff) {\it measure\/} on $\mathbb C^n$
and its restrictions to subsets of $\mathbb C^n$. So, if $k=0$, then $\sigma_0(S)=\sum_{z\in S}1$ for each $S\subset \mathbb C^n$, i.\,e. $\sigma_0(S)$ is equal to the number of points in the set $S\subset \mathbb C^n$.

\begin{theoA}[{see \cite[Corollary 1.1]{KhT15} for the case $n=2$, and 
\cite{KhT16_b}--\cite[Corollary 1]{KhT16_c} for $n>1$}]\label{th:s} 
	Let\/ $D$ be a non-trivial domain in $\CC^n_{\infty}$, $K$ a  compact subset of $D$
	with  $\Int K\neq \varnothing$. Let  $M\in \sbh_*(D)$ be a function  with the Riesz measure $\nu_M\in \mathcal M^+(D)$, and $ v\in \sbh_0^+(D\setminus K)$ a test function for $D$ outside of $K$. Assume that
	\begin{equation}\label{co:M}
	\int_{D\setminus K} v\, {\rm d} \nu_M<+\infty.
	\end{equation}
	Let  $f\in \Hol (D)$ and 	$\Zero_f:=\{z\in D \colon f(z)=0\}\supset {\tt Z}$. 
	If 
	\begin{equation}\label{h:}
	 		|f|\leq e^M\text{ on } D,
\quad 	\int_{{\tt Z}\setminus K}v\, {\rm d} \sigma_{2n-2}=+\infty,
\end{equation}
 then $f= 0$ on $D$, i.\,e. $\Zero_f=D$.
\end{theoA}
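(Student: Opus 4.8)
The plan is to argue by contradiction: assuming $f\not\equiv 0$ on $D$, I will show that the first hypothesis in \eqref{h:} together with the standing assumption \eqref{co:M} forces $\int_{{\tt Z}\setminus K}v\,\d\sigma_{2n-2}<+\infty$, contradicting the second hypothesis in \eqref{h:}. So suppose $f\not\equiv 0$ and set $u:=\log|f|$. Since $f\in\Hol(D)$ is not identically zero, $u\in\sbh_*(D)$, and the pointwise bound $|f|\le e^M$ on $D$ gives $u\le M$ on $D$. By the Poincar\'e--Lelong formula the Riesz measure $\nu_u$ of $\log|f|$ is the integration (with multiplicities) over the zero divisor of $f$; in particular there is a dimensional constant $\gamma_n>0$, determined by the normalization $c_{2n}$ of \eqref{df:cmf}, with $\nu_u\ge \gamma_n\,\sigma_{2n-2}$ on $\Zero_f$ (for $n=1$ this is just $\nu_u=\sigma_0$ counting zeros, $\gamma_1=1$). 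Since ${\tt Z}\subset \Zero_f$ and $v\ge 0$, this yields
\begin{equation}\label{plan:1}
\gamma_n\int_{{\tt Z}\setminus K}v\,\d\sigma_{2n-2}\le \int_{{\tt Z}\setminus K}v\,\d\nu_u\le \int_{D\setminus K}v\,\d\nu_u ,
\end{equation}
so everything reduces to the finiteness of $\int_{D\setminus K}v\,\d\nu_u$.

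The heart of the argument is the comparison
\begin{equation}\label{plan:2}
\int_{D\setminus K}v\,\d\nu_u\le \int_{D\setminus K}v\,\d\nu_M+C_K,\qquad C_K<+\infty,
\end{equation}
which I would obtain from a Green-type duality on $D\setminus K$, whose boundary splits into the outer part $\partial D$ and the inner part $\partial K$. Pairing each Riesz measure against $v$ through the symmetric Green kernel of $D\setminus K$ and applying Fubini converts $\int v\,\d\nu_u$ and $\int v\,\d\nu_M$ into $\int u\,\d\nu_v$ and $\int M\,\d\nu_v$ modulo boundary integrals; subtracting the two identities and using $u-M\le 0$ with $\nu_v\ge 0$ makes the bulk contribution $\int_{D\setminus K}(u-M)\,\d\nu_v\le 0$. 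On the outer boundary the test-function condition \eqref{v0lo} (equivalently the location of the boundary mass of $\nu_v$, recorded in the elementary properties {\bf t}2 and {\bf t}5) makes every term carrying the factor $v$ vanish, leaving only $\int_{\partial D}(M-u)\,\partial_n v$; here $M-u\ge 0$ while the outward normal derivative satisfies $\partial_n v\le 0$ (as $v\ge 0$ in $D$ decreases to $0$ at $\partial D$), so this term is $\le 0$ as well. Only the inner boundary $\partial K$ then contributes, and since $K=\clos K\Subset D$ has nonempty interior, the functions $u,M,v$ and their normal derivatives are controlled on a neighbourhood of the compact $\partial K$, making that contribution a finite constant $C_K$. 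Combining \eqref{plan:1}, \eqref{plan:2} and \eqref{co:M} gives $\int_{{\tt Z}\setminus K}v\,\d\sigma_{2n-2}<+\infty$, the desired contradiction.

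The main obstacle is the rigorous justification of the duality \eqref{plan:2} for a genuinely subharmonic, possibly very singular $u=\log|f|$ (which may equal $-\infty$ on a large set and for which no growth is assumed near $\partial D$ beyond $u\le M$) and for a merely subharmonic $M$. I would handle this by exhausting $D\setminus K$ by regular subdomains $G_j$ with $K\Subset G_j\Subset D$, writing the classical Poisson--Jensen identity on each $G_j$, and passing to the limit: the positivity of $v$ and the boundary-vanishing \eqref{v0lo} guarantee that the outer boundary integrals keep the correct (nonpositive) sign in the limit, while \eqref{co:M} supplies the integrable majorant needed to control $\int v\,\d\nu_u$. Two further points, routine once the framework is fixed, are the use of the Alexandroff compactification together with the Kelvin transform \eqref{stK}--\eqref{stK+} when $\infty\in D$, so that $D\setminus K\subset \RR^{2n}_{\infty}$ is treated uniformly, and the precise normalization in the Poincar\'e--Lelong bound relating $\nu_u$ to $\sigma_{2n-2}$ on $\Zero_f$, which fixes $\gamma_n$ but does not affect the finiteness conclusion.
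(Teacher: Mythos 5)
The paper you were given does not actually prove Theorem A: it is imported verbatim from \cite[Corollary 1.1]{KhT15} and \cite{KhT16_b}--\cite[Corollary 1]{KhT16_c}, so the comparison benchmark is the argument in those references. Your overall architecture coincides with it: assume $f\not\equiv 0$, set $u:=\log|f|\in\sbh_*(D)$ so that \eqref{h:} gives $u\le M$ on $D$, invoke Poincar\'e--Lelong to get $\nu_u\ge\gamma_n\sigma_{2n-2}$ on $\Zero_f$ (multiplicities are $\ge 1$, and the exact constant is indeed immaterial), and reduce everything to the comparison $\int_{D\setminus K}v\,\d\nu_u\le\int_{D\setminus K}v\,\d\nu_M+C_K$, which together with \eqref{co:M} contradicts the divergence hypothesis in \eqref{h:}. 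That is exactly the ``main lemma'' shape of the cited proofs.

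However, your justification of the comparison inequality has a genuine gap at precisely the point you call the heart of the argument. In Green's second identity on an exhausting regular domain $G_j$, the outer boundary contributes \emph{two} terms, $\int_{\partial G_j}v\,\partial_n(u-M)\,\d\sigma$ and $-\int_{\partial G_j}(u-M)\,\partial_n v\,\d\sigma$; only the second has the sign you claim. The first is not controlled by \eqref{v0lo}: the condition gives $v\to 0$ at $\partial D$ with no rate, while $\partial_n u$ for $u=\log|f|$ can be arbitrarily large positive on $\partial G_j$ (zeros of $f$ may accumulate at $\partial D$ --- nothing beyond $u\le M$ is assumed there), so ``positivity of $v$ plus boundary vanishing'' does not make the outer contribution nonpositive in the limit, and the proposed passage to the limit fails as stated. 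The standard repair --- in substance what the cited proofs do --- is to kill the outer boundary altogether: replace $v$ by $v_\e:=\max\{v-\e,0\}$, still subharmonic on $D\setminus K$ and, by property {\bf t}1 combined with the zero extension {\bf t}2, identically zero near $\partial D$, so the pairing of $\Delta(u-M)$ against $v_\e$ takes place over a set compactly contained in $D$; then let $\e\downarrow 0$ by monotone convergence ($v\ge 0$, $\nu_u\ge 0$). Your inner-boundary claim is also false as written: $f$ may vanish on $\partial K$ (the set ${\tt Z}$ may even cluster there), so $u=-\infty$ at points of $\partial K$ and the normal derivatives of $u$ are not ``controlled''; one must either pass to a generic nearby smooth inner surface (almost every level set or sphere works, since $u,M\in L^1_{\rm loc}$ and $\nu_u,\nu_M$ are locally finite) or bound the inner contribution by $\bigl(\sup_{D\setminus K}v\bigr)\,\nu_u(K')<+\infty$ --- which is where hypothesis \eqref{v0ls}, unused in your sketch, actually enters; it is harmless that $C_K$ then depends on $f$. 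Finally, \eqref{co:M} does not supply ``an integrable majorant'' for $\int v\,\d\nu_u$, since $\nu_M$ in no way dominates $\nu_u$; what the exhaustion really yields is a $j$-uniform bound $\int_{G_j}v_\e\,\d\nu_u\le\int_{D\setminus K}v\,\d\nu_M+C_K$, after which monotone convergence in $j$ and $\e$ finishes. With these repairs your plan becomes essentially the proof in \cite{KhT15}, \cite{KhT16_a}--\cite{KhT16_c}; without them the limiting argument does not go through.
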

This Theorem A shows that each constructed test function of the class $\sbh_0^+(D\setminus K)$ gives a uniqueness theorem in terms of the distribution of the zero set of holomorphic functions. The main goal of our article is to give some methods for constructing of test functions  in the sense of Definition \ref{dftf}
with applications to the distribution of the zero sets of holomorphic functions. 
Many such constructions have been proposed for domains in the complex plane $\CC$ 
in our work \cite[sections 4--5]{KhT15}. 

\section{Radial case}
\subsection{Radial subharmonic  functions}  
A subset $S\subset R_{\infty}^m$ is {\it radial,\/} if from the conditions $x\in S$ and $|x'|=|x|$ it follws that $x'\in S$. 
 A function $f$ on radial set  $S$ is {\it radial,\/} if  $f(x)=f(x')$ for all $|x|=|x'|$, $x\in S$. 
By  $\im f$ denote the image of $f$. Further
 \begin{equation*}
\spr S:=\{|x|\colon x\in S\},\quad 
 \spr_f\colon \spr S\to \im f, \quad \spr_f(r) :=f\bigl(|x|\bigr) \text{ for $r=|x|$,}
 \end{equation*} 
 is {\it the spherical projection\/}  of radial function $f$ on radial set $S$.
 Let $0\leq r_1<r_2\leq +\infty$ and $h\colon (r_1,r_2)\to \RR$ be a strictly increasing function.   A function 
  $f\colon (r_1,r_2)\to \RR$ is {\it convex of\/} $h$   if the function $f\circ h^{-1}$ is convex on $\bigl(h(r_1),h(r_2)\bigr)\subset \RR$.   Given $t\in \RR_*^+$, we set 
 \begin{equation*}
 h_m(t):=
 \begin{cases}
 t\quad &\text{for $m=1$},\\
 \log t\quad &\text{for $m=2$},\\
 -\dfrac{1}{t^{m-2}}\quad &\text{for $m\geq 3$},
 \end{cases}
 \qquad t\in \RR_*^+;
 \end{equation*}
  \begin{equation}\label{Arr}
 A(r_1,r_2):=\{ x\in \RR^m\colon r_1<|x|<r_2 \}.
 \end{equation}

\begin{propos}\label{pr:hc}  Let $Q\colon A(r_1,r_2)\to \RR$ be a radial function and
	$q:=\spr_Q$.  The following five conditions are equivalent:
\begin{enumerate}[\rm I.]
	\item\label{1s}   The function  $Q$ is subharmonic on  $A(r_1,r_2)$, $Q\neq \boldsymbol{-\infty}$. 	
	\item\label{2s} The function  $q$ is convex of $h_m$ on $\bigl(h_m(r_1),h_m(r_2)\bigr)\subset \RR$.
	\item\label{3s} The function $q$ has the following properties: {\rm i)}
$q$ is continuous, {\rm ii)} there exist the left derivative $q'_{\leftd}$ and the right derivative
$q'_{\rightd}(r)$, {\rm iii)} $q'_{\leftd}$ is  continuous on the left, and $q'_{\rightd}$ is  continuous on the right, {\rm iv)}  the functions  $r\mapsto r^{m-1}q'_{\leftd}(r)$,  $r\mapsto r^{m-1}q'_{\rightd}(r)$ are increasing, {\rm v)} $ q'_{\leftd}\leq q'_{\rightd}$  on $(r_1,r_2)$ 
{\rm vi)} there is a no-more-than countable set $R\subset (r_1,r_2)$ such that 
$ q'_{\leftd}= q'_{\rightd}$ on $(r_1,r_2)\setminus R$.
	\item\label{4s} For any $r_0\in (r_1,r_2)$ there is an increasing function $p_0\colon (r_1,r_2)\to \RR$ such that 
	\begin{equation*}
	q(r)=q(r_0)+\int_{r_0}^r \frac{p_0(t)}{t^{m-1}} \d t, \quad  r\in (r_1,r_2), 
	\end{equation*}
where  the function $p_0$  can be chosen in the form
	\begin{equation*}
	p_0(r):=	r^{m-1}q'_{\leftd}(r) \quad \text{ or } \quad  p_0(r):= r^{m-1}q'_{\rightd}(r), \quad   r\in (r_1,r_2).
	\end{equation*}
	\item\label{5s} The function  $q$ is upper semicontinuous, locally integrable on $(r_1,r_2)$, and  
	$r\mapsto \bigl(r^{m-1}q'(r)\bigr)'$ is a positive distribution (measure).
\end{enumerate}
\end{propos}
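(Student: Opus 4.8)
The plan is to take condition (II) as the pivot, since it reduces the whole statement to the one-variable theory of convex functions, and to establish the central link (I) $\Leftrightarrow$ (II) first. Two elementary facts organize everything. First, writing
\[
a_m:=\begin{cases} 1, & m\in\{1,2\},\\ m-2, & m\geq 3,\end{cases}
\]
the profile $h_m$ is an antiderivative of $r\mapsto a_m\,r^{1-m}$, so $h_m'(r)=a_m\,r^{1-m}>0$ on $\RR_*^+$; thus $h_m$ is a strictly increasing smooth diffeomorphism of $(r_1,r_2)$ onto $\bigl(h_m(r_1),h_m(r_2)\bigr)$, and $x\mapsto h_m(|x|)$ is harmonic on the annulus $A(r_1,r_2)$. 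Second, for a $C^2$ radial function $Q(x)=q(|x|)$ one has the classical radial Laplacian
\[
\Delta Q(x)=q''(r)+\frac{m-1}{r}\,q'(r)=\frac{1}{r^{m-1}}\bigl(r^{m-1}q'(r)\bigr)',\qquad r=|x|,
\]
which already exposes $\Delta Q\geq 0$ as equivalent to the monotonicity of $r\mapsto r^{m-1}q'(r)$.

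For (II) $\Rightarrow$ (I) I would set $\varphi:=q\circ h_m^{-1}$, convex on $\bigl(h_m(r_1),h_m(r_2)\bigr)$, so that $Q=\varphi\circ\bigl(h_m(|\cdot|)\bigr)$. Since $h_m(|x|)$ is harmonic, $Q$ is a convex function of a harmonic function, hence subharmonic: for smooth convex $\varphi$ this is the computation $\Delta(\varphi\circ u)=\varphi''(u)|\nabla u|^2\geq 0$ with $u$ harmonic, and the general convex case follows by approximating $\varphi$ from below by $C^\infty$ convex functions (mollification preserves convexity and converges locally uniformly), a locally uniform limit of subharmonic functions being subharmonic. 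Finiteness of $\varphi$ gives $Q\neq\boldsymbol{-\infty}$. For (I) $\Rightarrow$ (II), radiality forces the spherical mean of $Q$ over $\{|x|=r\}$ to equal $q(r)$; the classical theorem on means of subharmonic functions (in the sources cited at the start) states precisely that this mean is convex of $h_m$, giving (II). Here one also records that $q$ is real-valued: the polar set $\{Q=-\infty\}$ cannot contain a whole sphere for $m\geq 2$ (spheres are non-polar), so by radiality $q(r)>-\infty$ for every $r$, while for $m=1$ convexity gives finiteness on the open interval directly.

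The remaining equivalences (II) $\Leftrightarrow$ (III), (IV), (V) are then the standard facts about convex functions of one real variable, transported through the increasing smooth change of variable $s=h_m(r)$. The chain rule yields $q'_{\leftd}(r)=a_m\,r^{1-m}\varphi'_{\leftd}\bigl(h_m(r)\bigr)$ and likewise for the right derivative, whence $r^{m-1}q'_{\leftd}(r)=a_m\,\varphi'_{\leftd}(s)$ and $r^{m-1}q'_{\rightd}(r)=a_m\,\varphi'_{\rightd}(s)$. Continuity of $q$, existence and one-sided semicontinuity of $q'_{\leftd},q'_{\rightd}$, the inequality $q'_{\leftd}\leq q'_{\rightd}$, their coincidence off a countable set, and the monotonicity of $r\mapsto r^{m-1}q'_{\leftd,\rightd}(r)$ all follow from the corresponding properties of the convex $\varphi$. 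Condition (IV) is the integral representation of a convex function written back in the $r$-variable, with the increasing choice $p_0(r)=r^{m-1}q'_{\leftd}(r)$ or $r^{m-1}q'_{\rightd}(r)$. Condition (V) is the distributional characterization ``$\varphi$ convex $\iff\varphi''\geq 0$ as a measure'': since $r^{m-1}q'(r)=a_m\,\varphi'(s)$ with $s=h_m(r)$ increasing, its distributional derivative $\bigl(r^{m-1}q'(r)\bigr)'$ is a positive measure exactly when $\varphi'$ is increasing, the upper semicontinuity and local integrability of $q$ being automatic for a continuous convex $q$.

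The main obstacle is the non-smooth part of (I) $\Leftrightarrow$ (II): for merely subharmonic $Q$ (upper semicontinuous, $\Delta Q\geq 0$ only in the distributional sense) the pointwise computations above are not directly available. I would resolve this either by mollifying $Q$ with a \emph{radial} mollifier, which preserves both radiality and subharmonicity and reduces matters to the smooth radial Laplacian formula before passing to the decreasing limit, or by citing the means theorem as a black box in the direction (I) $\Rightarrow$ (II). The rest is bookkeeping: carrying the constant $a_m$ and the three cases $m=1,2,\geq 3$ uniformly, keeping the left/right derivatives separate, and verifying finiteness of $q$ via non-polarity of spheres.
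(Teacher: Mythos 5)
The paper itself contains no proof of this proposition: the authors write ``The proof is omitted'' and defer to Hayman--Kennedy, Ransford, H\"ormander, and to \cite[\S~4]{KhT15} for $m=2$. Your argument is a faithful reconstruction of precisely the standard material those citations encapsulate: (I)$\Leftrightarrow$(II) via the two classical facts that a convex function of a harmonic function is subharmonic (your mollification of $\varphi$ handles the non-smooth case correctly, and $h_m(|\cdot|)$ is indeed harmonic on $A(r_1,r_2)$, componentwise affine when $m=1$) and that spherical means of a subharmonic function are convex of $h_m$, the mean coinciding with $q$ by radiality; then (II)$\Leftrightarrow$(III)$\Leftrightarrow$(IV)$\Leftrightarrow$(V) by one-variable convex calculus under the substitution $s=h_m(r)$, with your identity $r^{m-1}q'_{\leftd,\rightd}(r)=a_m\,\varphi'_{\leftd,\rightd}(s)$ doing all the bookkeeping. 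So in substance you follow exactly the route the paper intends its references to supply, and you make it self-contained, which is a gain. One remark: establishing finiteness of $q$ via non-polarity of spheres is redundant, since the proposition already hypothesizes $Q\colon A(r_1,r_2)\to\RR$.

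The one place where your sketch is genuinely thinner than it should be is (V)$\Rightarrow$(II). The distributional hypothesis determines $q$ only up to a Lebesgue-null set: it produces a continuous function $\tilde q$, convex of $h_m$, with $q=\tilde q$ almost everywhere. Upper semicontinuity then forces $q\geq\tilde q$ everywhere (approach any $r$ through points of the full-measure set where $q=\tilde q$), but it does \emph{not} exclude upward spikes of $q$ on a null set: raising the value of a usc function at a single point keeps it usc, leaves it locally integrable, and does not change the distribution $\bigl(r^{m-1}q'\bigr)'$. Such a modified $q$ satisfies the literal wording of (V) while failing (III). To close this you must either invoke the regularization identity $q(r)=\limsup_{t\to r}q(t)$ (equivalently, continuity of $q$), or read (V), as is customary in Riesz-type characterizations, as identifying $q$ with its a.e.\ equivalence class. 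This imprecision is arguably inherited from the statement itself, but a complete proof should name the issue and resolve it rather than cite ``$\varphi$ convex $\iff$ $\varphi''\geq 0$'' as a black box.
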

The proof is omitted (see \cite{HK}--\cite{H} and \cite[\S~4]{KhT15} for $m=2$ or $\CC$). 

\subsection{Radial test functions}\label{ssCrC} Let $0<r_0<R\in \RR_{+\infty}^+$.  The following statement describes all radial test functions for  the domain $D=B(R)\subset \RR^m_{\infty}$. Recall that 
$B(+\infty)=\RR^m$.
\begin{propos}\label{prsds} Let $v\colon B(R)\setminus \overline B(r_0) \to \RR^+$ 
	be a radial function on    
	 $B(R)\setminus \overline B(r_0)$. The following three conditions are equivalent:
	\begin{enumerate}[\rm 1.]
		\item\label{r_1} The function $v$ is a test function  for\/ $B(R)$ outside of\/ $\overline B(r_0)$.
		\item\label{r_2} There is a decreasing function $d\colon (r_0,R)\to \RR^+$ such that
		\begin{equation*}
		v(x)=\int_{|x|}^{R} \frac{d(t)}{t^{m-1}} \d t<+\infty, \quad x\in B(R)\setminus \overline B(r_0).
			\end{equation*}
		\item\label{r_3} The function $\spr_v\circ h_m^{-1}$ is convex on $\bigl(h_m(r_0),h(R)\bigr)$ and $$\lim\limits_{h_m(R)>x\to h_m(R)} \spr_v (x)=0.$$
	\end{enumerate}
	\end{propos}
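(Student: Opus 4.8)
The plan is to transfer everything to the spherical projection $q:=\spr_v$, a function on $(r_0,R)$ with $v(x)=q(|x|)$, and to read each of the three conditions as a statement about $q$ via Proposition~\ref{pr:hc}. First I would record the purely subharmonic part. Since $v\geq 0$ is radial and real-valued (so $v\neq\boldsymbol{-\infty}$ and Proposition~\ref{pr:hc} applies), parts \ref{1s}, \ref{2s}, \ref{4s} of that proposition give that $v\in\sbh^+\bigl(A(r_0,R)\bigr)$ is the same as $q$ being convex of $h_m$ on $\bigl(h_m(r_0),h_m(R)\bigr)$, which in turn is the same as a representation $q(r)=q(r')+\int_{r'}^{r}t^{1-m}p(t)\d t$ with $p(t):=t^{m-1}q'_{\rightd}(t)$ increasing. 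This already supplies the convexity clause of \ref{r_3} and the skeleton of the integrand in \ref{r_2}.

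Next I would translate the boundary clauses. The boundary $\partial B(R)$ is the sphere $\{|x|=R\}$ when $R<+\infty$ and the point $\{\infty\}$ when $R=+\infty$; in either case the vanishing condition \eqref{v0lo} for a radial $v$ is exactly $\lim_{r\to R}q(r)=0$, i.e. the limit clause of \ref{r_3} written in the variable $h_m$. This reduces the equivalences to an analytic statement about the convex profile $\phi:=q\circ h_m^{-1}$ on $\bigl(h_m(r_0),h_m(R)\bigr)$.

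The crux is a monotonicity-and-finiteness step. I would prove the lemma: a positive $q$ that is convex of $h_m$ with $q(r)\to 0$ as $r\to R$ is nonincreasing. On $\phi$ this is immediate: if $\phi$ had $\phi'_{\rightd}(s_0)>0$ at some $s_0$, then by convexity $\phi'_{\rightd}$ stays $\geq\phi'_{\rightd}(s_0)$, so $\phi(s)\geq\phi(s_0)+\phi'_{\rightd}(s_0)(s-s_0)$ remains bounded below by a positive quantity as $s\to h_m(R)$, contradicting $\phi\to 0$. Hence $q'_{\rightd}\leq 0$, the increasing function $p$ is $\leq 0$, and $d:=-p=-t^{m-1}q'_{\rightd}\geq 0$ is decreasing; letting $r\to R$ in the representation fixes the constant and yields $q(r)=\int_{r}^{R}t^{1-m}d(t)\d t$, which is the formula of \ref{r_2}. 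Since $q$ is nonincreasing, $\sup_{A(r_0,R)}v=\lim_{r\to r_0}q(r)=\int_{r_0}^{R}t^{1-m}d(t)\d t$, so the boundedness \eqref{v0ls} is precisely the finiteness ``$<+\infty$'' asserted in \ref{r_2}.

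Assembling these steps makes \ref{r_1}$\Rightarrow$\ref{r_2} and \ref{r_1}$\Rightarrow$\ref{r_3} routine; for the converses I would start from the representation (resp. convexity), run Proposition~\ref{pr:hc} backwards to recover subharmonicity, read positivity off $d\geq 0$, the vanishing at $R$ off $\int_r^R\to 0$, and boundedness off the finite total integral, obtaining \ref{r_2}$\Rightarrow$\ref{r_1} and \ref{r_3}$\Rightarrow$\ref{r_2}. I expect the main obstacle to be exactly the finiteness near the inner sphere $\{|x|=r_0\}$: convexity together with vanishing at $R$ still permits $q(r)\to+\infty$ as $r\to r_0$, so the genuine content of being a \emph{test} function is that this growth is finite, matching $\int_{r_0}^{R}t^{1-m}d(t)\d t<+\infty$. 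A secondary point requiring care is the case $R=+\infty$, where for $m\leq 2$ the weight $t^{1-m}$ fails to be integrable at $+\infty$ and the decay forcing $q\to 0$ must come from $d$, whereas for $m\geq 3$ the weight is integrable and $d$ may even stay bounded.
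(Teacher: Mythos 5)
Your argument is correct in its core, but it takes a genuinely different route from the paper's proof, which is a one-step reduction: extend $v$ by zero across $\partial B(R)$ as in \eqref{d:exvv} (property {\bf t}\ref{pro:tf2}), apply the inversion and Kelvin transform \eqref{stK}--\eqref{stK+}, and quote the equivalences \ref{1s}$\Leftrightarrow$\ref{4s} and \ref{1s}$\Leftrightarrow$\ref{2s} of Proposition~\ref{pr:hc} for the extended function. You never extend and never invert: you work on the annulus $A(r_0,R)$ itself, and the role of the zero-extension is played by your monotonicity lemma (a positive $h_m$-convex profile with limit $0$ at the outer end is nonincreasing), which is what forces $p=t^{m-1}q'_{\rightd}\leq 0$, hence $d:=-p\geq 0$ decreasing, with the constant fixed by letting $r\to R$. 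In the paper's route these facts come for free, since the extended profile vanishes identically beyond $h_m(R)$ while $p_0$ is increasing; your route pays for this with the explicit lemma but gains transparency, avoids the gluing argument behind {\bf t}\ref{pro:tf2}, and treats $R=+\infty$ without the Kelvin machinery. Your closing remark on the dichotomy $m\leq 2$ versus $m\geq 3$ at $R=+\infty$ is also accurate (for $m\geq 3$ the pointwise finiteness in \ref{r_2} is automatic for any decreasing positive $d$).

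The one loose joint is the finiteness bookkeeping that you yourself flag at the end. Condition \ref{r_3} contains no finiteness requirement at the inner sphere, so from \ref{r_3} your lemma yields only the representation with each single value $q(r)$, $r>r_0$, finite, never $\int_{r_0}^R t^{1-m}d(t)\d t<+\infty$. Hence if the clause ``$<+\infty$'' in \ref{r_2} is read as you read it (total integral finite, equivalently \eqref{v0ls}), your step \ref{r_3}$\Rightarrow$\ref{r_2} breaks, while under the literal pointwise reading your step \ref{r_2}$\Rightarrow$\ref{r_1} breaks: the profile with $q\circ h_m^{-1}(s)=\frac{1}{s-h_m(r_0)}-\frac{1}{h_m(R)-h_m(r_0)}$ (for $m\leq 2$, $R=+\infty$ take simply $\frac{1}{s-h_m(r_0)}$) satisfies \ref{r_3} and the pointwise version of \ref{r_2}, yet violates \eqref{v0ls}. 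This is not your error but a defect of the statement of Proposition~\ref{prsds} itself --- condition \ref{r_3} should additionally demand $\lim_{r_0<r\to r_0}\spr_v(r)<+\infty$, matching \eqref{v0ls} --- and the paper's proof, which reduces everything to Proposition~\ref{pr:hc} without ever mentioning \eqref{v0ls}, glosses over exactly the same point. With that proviso made explicit, your proof is complete, and in fact more informative than the paper's, since it isolates precisely which clause of Definition~\ref{dftf} corresponds to which clause of \ref{r_2} and \ref{r_3}.
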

	 \begin{proof} If we apply the inversion and the Kelvin transform from \eqref{stK}--\eqref{stK+}
	 	 to the extended  function \eqref{d:exvv} with $D=B(R)$ and $K=B(r_0)$, then 
the equivalences \ref{r_1}$\Leftrightarrow$\ref{r_2}  and \ref{r_1}$\Leftrightarrow$\ref{r_3}   follow from the equivalences \ref{1s}$\Leftrightarrow$\ref{4s} and \ref{1s}$\Leftrightarrow$\ref{2s} of Proposition \ref{pr:hc} respectively.
\end{proof}
We can easily add other equivalences  to Proposition \ref{prsds} based on Proposition \ref{pr:hc}.

\subsection{Cases $D=B(R)$}\label{ssCrC+}
The following result follows immediately from Theorem A and Proposition \ref{pr:hc} (see \cite[\S~4]{KhT15} for $m=2$ or $\CC$). 

\begin{theorem} Let $0<r_0<R\in \RR_{\infty}^+$. Let $M\colon B(R)\to \RR$ be a continuous 
	radial function and $q:=\spr_M$. Suppose that $q$ is convex of $h_{2n}$
	on $(0,R)$ and  there is a decreasing function $d\colon (r_0,R)\to \RR^+$ such that
	\begin{equation*}
	\int_{r_0}^R d(r)q_{\rightd}'(r) \d r\overset{\eqref{co:M}}{<}+\infty. 
	\end{equation*}
	If the function $f\in \Hol \bigl(B(R)\bigr)$ with zero set $\Zero_f\supset  {\tt Z} $ satisfies the conditions
	{\rm (see \eqref{h:})}
\begin{equation*}\label{h:1}
|f|\leq e^M\text{ on } B(R),
\quad 	\int_{r_0}^R d(r)s_{\tt Z}(r) \frac{\d r}{r^{2n-1}}=+\infty,
\end{equation*}
where $s_{\tt Z}(r)=\sigma_{2n-2}\bigl(Z\cap B(r)\bigr)$, $r\in (r_0,R)$, 
then $f= 0$ on $D$. 
	\end{theorem}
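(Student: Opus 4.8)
The plan is to recognize the statement as a concrete instance of Theorem~A in which the test function is built by hand from the datum $d$. Throughout I would use the identification $\CC^n_\infty\cong\RR^{2n}_\infty$, so that $m=2n$, and take $D=B(R)$ together with the compact set $K:=\overline B(r_0)$, which has $\Int K=B(r_0)\neq\varnothing$ as Theorem~A requires. First I would set
$$
v(x):=\int_{|x|}^{R}\frac{d(t)}{t^{2n-1}}\d t,\qquad x\in B(R)\setminus\overline B(r_0),
$$
and check that this integral is finite; then by the implication \ref{r_2}$\Rightarrow$\ref{r_1} of Proposition~\ref{prsds} (with $m=2n$) the radial function $v$ belongs to $\sbh_0^+\bigl(B(R)\setminus\overline B(r_0)\bigr)$, i.e.\ it is a genuine test function. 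Simultaneously, since $q=\spr_M$ is convex of $h_{2n}$ on $(0,R)$ and $M$ is continuous and real-valued, the implication \ref{2s}$\Rightarrow$\ref{1s} of Proposition~\ref{pr:hc} (continuity handling the origin) gives $M\in\sbh_*\bigl(B(R)\bigr)$. Thus all the structural hypotheses of Theorem~A are in force, and it remains only to verify its two analytic conditions \eqref{co:M} and the divergence in \eqref{h:}.

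To obtain \eqref{co:M} I would exploit that, for radial $M$, the Riesz measure is governed by the radial structure of Proposition~\ref{pr:hc}: the increasing function $p_0(r)=r^{2n-1}q_{\rightd}'(r)$ generates the positive Stieltjes measure $\d p_0=\bigl(r^{2n-1}q'(r)\bigr)'\,\d r$, and under the spherical projection $\nu_M$ becomes $\kappa_n\,\d p_0$ on $(r_0,R)$, where $\kappa_n:=1/\max\{1,2n-2\}$ is the product of $c_{2n}$ with the area of the unit sphere in $\RR^{2n}$. Writing $v$ through its spherical projection and applying Tonelli's theorem to the two positive measures $d(t)\,\d t$ and $\d p_0$, I get
\begin{align*}
\int_{B(R)\setminus K}v\d\nu_M
&=\kappa_n\int_{r_0}^{R}\Bigl(\int_{r}^{R}\frac{d(t)}{t^{2n-1}}\d t\Bigr)\d p_0(r)\\
&=\kappa_n\int_{r_0}^{R}\frac{d(t)}{t^{2n-1}}\bigl(p_0(t)-p_0(r_0)\bigr)\d t,
\end{align*}
and since $p_0(t)/t^{2n-1}=q_{\rightd}'(t)$ the right-hand side equals $\kappa_n\bigl(\int_{r_0}^{R}d(t)q_{\rightd}'(t)\,\d t-p_0(r_0)\,\spr_v(r_0)\bigr)$, which is finite by the assumption $\int_{r_0}^R d\,q_{\rightd}'\,\d r<+\infty$ together with the finiteness of $v$. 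This is precisely \eqref{co:M}.

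For the zero-set condition I would run the same radial bookkeeping against $\sigma_{2n-2}$. As $v$ is radial, $v(z)=\int_{|z|}^{R}d(t)\,t^{-(2n-1)}\,\d t$, and Tonelli's theorem (interchanging the $t$-integration with the integration in $z$ over $\{r_0<|z|<t\}$) gives
\begin{align*}
\int_{{\tt Z}\setminus K}v\d\sigma_{2n-2}
&=\int_{r_0}^{R}\frac{d(t)}{t^{2n-1}}\,\sigma_{2n-2}\bigl({\tt Z}\cap A(r_0,t)\bigr)\d t\\
&=\int_{r_0}^{R}d(t)\bigl(s_{\tt Z}(t)-s_{\tt Z}(r_0)\bigr)\frac{\d t}{t^{2n-1}}.
\end{align*}
The subtracted part is $s_{\tt Z}(r_0)\int_{r_0}^{R}d(t)\,t^{-(2n-1)}\,\d t=s_{\tt Z}(r_0)\,\spr_v(r_0)<+\infty$, whereas the hypothesis forces $\int_{r_0}^{R}d(t)s_{\tt Z}(t)\,t^{-(2n-1)}\,\d t=+\infty$; hence $\int_{{\tt Z}\setminus K}v\d\sigma_{2n-2}=+\infty$, which is the second relation in \eqref{h:}. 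Since $|f|\le e^M$ on $B(R)$ holds by assumption, Theorem~A now applies verbatim and yields $f\equiv 0$ on $D=B(R)$.

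The only genuinely delicate point I anticipate is the justification that $\nu_M$ projects to $\kappa_n\,\d p_0$ and the accompanying Tonelli interchanges near the endpoint $R$ — in particular the case $R=+\infty$, where one must also secure the finiteness of $v$ (the integrand $d(t)/t^{2n-1}$ need not be controlled by $d\,q_{\rightd}'$ unless $q_{\rightd}'$ stays bounded away from $0$). Everything else is a direct translation of the hypotheses through Propositions~\ref{pr:hc} and~\ref{prsds}, so the proof should indeed follow ``immediately'' once these radial identities are recorded.
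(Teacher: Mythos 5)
Your proposal is correct and follows essentially the same route as the paper: you take the radial test function $v(x)=\int_{|x|}^{R}d(t)\,t^{-(2n-1)}\,\d t$ from Proposition~2, use the polar decomposition $\d\nu_M=c_{2n}\,\d\bigl(r^{2n-1}q_{\rightd}'(r)\bigr)\otimes\d\sigma_{2n-1}$, and verify \eqref{co:M} and the divergence in \eqref{h:} by the same Fubini/integration-by-parts computation (your zero-set identity, including the subtracted term $s_{\tt Z}(r_0)\spr_v(r_0)$, is exactly the paper's) before invoking Theorem~A with $K=\overline B(r_0)$. Your closing caveat about the case $R=+\infty$ and the finiteness of $v$ is a fair observation, but it does not change the fact that your argument coincides with the paper's sketch.
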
	
The proof is a direct computation of \eqref{co:M}--\eqref{h:}  for radial case 
with $D=B(R)$ and $K=\overline B(r_0)$ 
using the integration by parts.	So, in \eqref{co:M},
\begin{equation*}
\d \nu_M (rz)\overset{\eqref{df:cmf}}{=}c_{2n}\d  \bigl(r^{m-1}q_{\rightd}'(r)\bigr)\otimes \d \sigma_{2n-1}(z), \quad z\in \partial 
B(1),
\end{equation*}
and we consider the test function $v$ from Proposition \ref{prsds}\eqref{r_2}, in \eqref{h:}, 
\begin{equation*}
\int_{{\tt Z}\setminus B(r_0)}v\, {\rm d} \sigma_{2n-2}
=\int_{r_0}^R d(r)s_{\tt Z}(r) \frac{\d r}{r^{2n-1}} -s_{\tt Z}(r_0) \int_{r_0}^R \frac{d(t)}{t^{m-1}} \d t.
\end{equation*}
Radial test functions can also be considered for sets $A(r_1,r_2)\overset{\eqref{Arr}}{\subset} \CC^n$.
But it is not of interest for holomorphic functions on $A(r_1,r_2)\subset \CC^n$ in view of the 
 classical Hartogs extension phenomenon. Here we do not consider also holomorphic functions on polydiscs  in $\CC^n$, $n>1$.
 
 \section{Green's case}\label{Gc}
 Throughout this section \ref{Gc} $D\subset \RR_{\infty}^m$ is a regular domain 
  with Green's function $g_D:=g_D(\cdot, x_0)$ (with the pole at $x_0\in D$). 
  We set 
\begin{equation}\label{Dt}
D_t:=\{x\in D\colon g_D(x,x_0)>t\}\ni x_0 , \quad 0<t\leq t_0\in \RR_*^+.
\end{equation}

 \subsection{Superpositions of  convex functions and Green's functions} 
 \begin{propos}\label{pr_rf}  Let  $q\colon [0,t_0)\to \RR^+$ be a convex function such that $q(0)=0$. Then the superposition 
 	 $q\circ g_D$ is  a  test function for $D$ outside of $D_{t_0}$, i.\,e.  $q\circ g_D\in \sbh_0^+(D\setminus \overline D_{t_0})$. 
  \end{propos}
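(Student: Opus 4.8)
The plan is to verify directly the three requirements of Definition \ref{dftf} for the function $v:=q\circ g_D$ on $D\setminus\overline{D}_{t_0}$: positivity together with subharmonicity, the boundary vanishing \eqref{v0lo}, and the boundedness \eqref{v0ls}. I would begin with two elementary remarks about $q$. First, since $q(0)=0$ and $q\geq 0$, the point $t=0$ is a global minimizer of the convex function $q$, so $q$ is non-decreasing on $[0,t_0)$. Second, convexity gives the chord bound $q(t)\leq\frac{s-t}{s}\,q(0)+\frac{t}{s}\,q(s)$ for $0<t<s$, whence $\limsup_{t\to0^+}q(t)\leq q(0)=0$; together with $q\geq 0$ this yields $\lim_{t\to 0^+}q(t)=0=q(0)$, i.e. $q$ is right-continuous at $0$.

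Next I would settle positivity and subharmonicity. As $x_0\in D_{t_0}\subset\overline{D}_{t_0}$, the pole of $g_D$ lies in $K=\overline{D}_{t_0}$, so $g_D$ is harmonic on $D\setminus\overline{D}_{t_0}$. A convex function of a harmonic function is subharmonic (for $m\geq 2$ this is the computation $\Delta(q\circ g_D)=q''(g_D)\,|\nabla g_D|^2\geq 0$; for $m=1$, $g_D$ is affine on each component of $D\setminus\overline{D}_{t_0}$ and convex-of-affine is convex, i.e. subharmonic in the sense fixed in the excerpt). Hence $v\in\sbh(D\setminus\overline{D}_{t_0})$, and $v\geq 0$ because $q\geq 0$; thus $v\in\sbh^+(D\setminus\overline{D}_{t_0})$. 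If desired this can be phrased through the equivalences of Proposition \ref{pr:hc}, but the composition rule is the shortest route.

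For the boundary condition \eqref{v0lo} I would use regularity of $D$: for a regular domain $\lim_{D\ni x'\to x}g_D(x',x_0)=0$ at every $x\in\partial D$. Composing with the right-continuity of $q$ at $0$ proved above gives $\lim_{D\ni x'\to x}v(x')=q(0)=0$, which is exactly \eqref{v0lo} (the outer boundary $\partial D$ being the only part of $\partial(D\setminus\overline{D}_{t_0})$ relevant there).

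The boundedness \eqref{v0ls} is where I expect the real work. On $D\setminus\overline{D}_{t_0}$ the harmonic function $g_D$ has boundary values $0$ on $\partial D$ and $t_0$ on $\partial D_{t_0}=\{g_D=t_0\}$, so by the maximum principle $0\leq g_D<t_0$ there, with $\sup g_D=t_0$ approached only toward $\partial D_{t_0}$. Since $q$ is non-decreasing, this forces $\sup_{D\setminus\overline{D}_{t_0}}v=\lim_{t\to t_0^-}q(t)$; equivalently, in the maximum-principle form of \eqref{v0ls} (property {\bf t}4) the finiteness of $\sup v$ reduces to finiteness of $\limsup_{x'\to x}v(x')=\lim_{t\to t_0^-}q(t)$ at the inner boundary $x\in\partial D_{t_0}$. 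This is the delicate point: a convex function on the half-open interval $[0,t_0)$ need not be bounded near $t_0$ (for instance $q(t)=t/(t_0-t)$ satisfies all the stated hypotheses yet blows up), so \eqref{v0ls} holds precisely when $\lim_{t\to t_0^-}q(t)<+\infty$. Granting that $q$ is bounded on $[0,t_0)$ — which is exactly what \eqref{v0ls} demands — all three conditions of Definition \ref{dftf} are met and $v=q\circ g_D\in\sbh_0^+(D\setminus\overline{D}_{t_0})$.
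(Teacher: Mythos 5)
Your verification follows the same route as the paper's own two-sentence proof for the parts the paper actually proves: subharmonicity of $v=q\circ g_D$ via composition of a convex function with a harmonic one (the pole $x_0$ lies in $D_{t_0}$, since $g_D(x_0,x_0)=+\infty>t_0$, so $g_D$ is harmonic on $D\setminus\overline D_{t_0}$), and the boundary condition \eqref{v0lo} from $q(0)=0$ together with the vanishing of $g_D(\cdot,x_0)$ on $\partial D$ for a regular domain. Your preliminary remarks --- that $q\geq 0$ with $q(0)=0$ forces $q$ to be non-decreasing, and that the chord estimate $q(t)\leq (t/s)\,q(s)$ gives right-continuity of $q$ at $0$, so that $v$ indeed tends to $0$ at $\partial D$ --- are correct and make airtight what the paper leaves implicit.

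The substantive difference is your treatment of \eqref{v0ls}, and there you are right: the paper's proof never verifies \eqref{v0ls}, and that condition does not follow from the stated hypotheses. Since $g_D$ is continuous with value $t_0$ on $\partial D_{t_0}$, and by the strong maximum principle $0<g_D<t_0$ on $D\setminus\overline D_{t_0}$ (a point of that set with $g_D=t_0$ would be an interior maximum of the harmonic function $g_D$ there), one has $\sup_{D\setminus\overline D_{t_0}}v=\lim_{t\to t_0^-}q(t)$, and your example $q(t)=t/(t_0-t)$ --- convex on $[0,t_0)$, positive, $q(0)=0$, yet $q(t)\to+\infty$ as $t\to t_0^-$ --- is a valid counterexample to Proposition \ref{pr_rf} as literally stated against Definition \ref{dftf}. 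The statement becomes correct exactly under the additional hypothesis you isolate, namely $\lim_{t\to t_0^-}q(t)<+\infty$ (equivalently, $q$ extends to a convex function on the closed interval $[0,t_0]$), and with that hypothesis your argument is complete. So your proposal is not a mere reproduction of the paper's proof: it is a corrected, conditional version that identifies and repairs a genuine gap in both the statement and the paper's proof.
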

 \begin{proof} The superposition of convex function $f$ and harmonic function $g_D(\cdot,x_0)$ 
 	is subharmonic.  For $v:=f\circ g_D$, the condition \eqref{v0lo}  follows from the condition  $f(0)=0$, since 
 	the Green's function $g_D(\cdot , x_0)$ vanishes on the boundary $\partial D$ of regular domain $D$. 
 	\end{proof}
 	
 	\begin{propos}\label{pr_rF} Let $F\colon (-\RR^{+}_*)\to \RR$ be a  convex increasing function, $F(-\infty):=\lim\limits_{x\to -\infty} F(x)\in \RR_{-\infty}$, where $(-\RR^+_*):=\RR_{-\infty}\setminus \RR^+$. Then the  superposition  $F\circ (- g_D)$ is  subharmonic on $D$. 
 	\end{propos}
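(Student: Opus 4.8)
The plan is to exhibit $F\circ(-g_D)$ as the pointwise limit of a \emph{decreasing} sequence of subharmonic functions obtained by truncating the pole of $-g_D$, so that the only genuinely delicate point, the pole $x_0$ at which $-g_D=-\infty$, is reached only in the limit. First I would record the structural facts about the Green's function: since $D$ is a regular domain, $g_D=g_D(\cdot,x_0)$ is a positive superharmonic function on $D$, harmonic on $D\setminus\{x_0\}$ and vanishing on $\partial D$; consequently $-g_D\in\sbh(D)$, with $-g_D<0$ on $D\setminus\{x_0\}$ and $-g_D(x_0)=-\infty$. In particular $-g_D$ takes all its values in the domain $(-\RR^+_*)$ of $F$.

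For the truncation, fix $k\in\NN$ and set $u_k:=\max\{-g_D,\,-k\}$. As the maximum of the subharmonic function $-g_D$ and the constant $-k$, each $u_k$ is subharmonic on $D$, and it takes values in the interval $[-k,0)$, on which $F$ is a finite-valued convex increasing function. Hence, by the elementary fact that a convex increasing function of a subharmonic function is again subharmonic (the same principle already invoked in Proposition \ref{pr_rf}), each superposition $F\circ u_k$ is subharmonic on $D$. At this stage the argument of $F$ never equals $-\infty$, so no difficulty arises from the value $F(-\infty)$.

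Finally I would let $k\to\infty$. Since $\max\{-g_D,-k\}$ decreases pointwise to $-g_D$ as $k$ increases (at the pole $u_k(x_0)=-k\to-\infty$), and $F$ is increasing with $F(-\infty)=\lim_{x\to-\infty}F(x)$, the sequence $F\circ u_k$ decreases pointwise on all of $D$ to $F\circ(-g_D)$, including the value $F(-\infty)$ at $x_0$. A pointwise decreasing limit of subharmonic functions is subharmonic or identically $-\infty$; since $F$ is real-valued on $(-\infty,0)$, the limit is finite on $D\setminus\{x_0\}$ and therefore not identically $-\infty$. Thus $F\circ(-g_D)\in\sbh(D)$, which is the assertion.

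I expect the main obstacle to be precisely the pole $x_0$: one cannot apply the composition rule directly there, because $-g_D(x_0)=-\infty$ lies on the boundary of the range of the approximants. This is exactly where the monotonicity hypothesis on $F$ is essential — it guarantees both that $F(-\infty)$ is a well-defined infimum in $\RR_{-\infty}$ and that the truncated compositions converge monotonically downward, so that the decreasing-limit theorem carries subharmonicity across $x_0$. An alternative route would establish subharmonicity on $D\setminus\{x_0\}$ first (composition rule for the harmonic function $-g_D$) and then remove the isolated singularity at $x_0$ via the bound $\limsup_{x\to x_0}F\bigl(-g_D(x)\bigr)=F(-\infty)<+\infty$; the monotone approximation above seems cleaner and keeps the value $F(-\infty)$ out of every intermediate step.
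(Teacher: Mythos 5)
Your argument is correct, and it amounts to a self-contained proof of the one lemma the paper simply quotes. The paper's own proof is two lines: $-g_D(\cdot,x_0)$ is subharmonic on $D$, and the superposition of a convex \emph{increasing} function with a subharmonic function is subharmonic --- this composition rule being invoked as known (it is in the cited monographs of Hayman--Kennedy and H\"ormander, where the convention $F(-\infty):=\lim_{x\to-\infty}F(x)$ is built into the statement, so the pole at $x_0$ needs no separate discussion). What you do differently is re-derive that rule: you apply the composition principle only where it is elementary, namely to the truncations $u_k=\max\{-g_D,-k\}$, whose ranges lie in $[-k,0)$ since $g_D>0$ on $D$ (so the value $-\infty$ never enters), and then pass to the decreasing limit $F\circ u_k\downarrow F\circ(-g_D)$, using that a decreasing limit of subharmonic functions is subharmonic (in the paper's convention $\sbh(D)$ even contains $\boldsymbol{-\infty}$, so your final remark that the limit is finite off $x_0$ is a harmless extra). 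Each step checks out: $u_k$ is subharmonic as a maximum of two subharmonic functions, the convergence $F\circ u_k \downarrow F\circ(-g_D)$ is genuinely monotone because $F$ is increasing, the sequence stabilizes pointwise away from $x_0$, and at $x_0$ it tends to $F(-\infty)$ by definition. The trade-off: the paper buys brevity by outsourcing the delicate point to the literature; you buy transparency and make explicit exactly where monotonicity of $F$ (as opposed to mere convexity, which suffices in Proposition \ref{pr_rf} only because $g_D$ is \emph{harmonic} off the pole) is indispensable --- without it the compositions need not be subharmonic and the truncated sequence need not decrease. Your alternative route via the isolated singularity at $x_0$ would also work, but note it requires checking upper semicontinuity at $x_0$ in addition to the bound $\limsup_{x\to x_0}F\bigl(-g_D(x)\bigr)=F(-\infty)<+\infty$; the monotone scheme delivers that for free, since a decreasing limit of upper semicontinuous functions is upper semicontinuous.
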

 	\begin{proof} Obviously, the function 
 		$-g_D(\cdot, x_0)$ is subharmonic on $D$.  The superposition of convex increasing function $F$ and subharmonic function $-g_D(\cdot,x_0)$ is subharmonic on $D$.  
 	\end{proof}
 	
 	\subsection{A uniqueness theorem with Green's functions} 
 		
 For simplicity, we assume that the boundaries  $\partial D$ and $\partial D_t$ of $D_t$ from \eqref{Dt}
   belong to   the class	$C^2$.
   \begin{theorem}[{see \cite[Theorem  7]{KhT15} for $n=1$}] 
   	Suppose that the functions $q$ and $F$ are the same as in Propositions\/ {\rm \ref{pr_rf}} and\/ 
   	{\rm \ref{pr_rF}.} Let $q\in C^1(0,t_0)$ and $F\in C^1(-\RR^{+}_*)$, and
 	\begin{equation}\label{qFc}
\int_0^{t_0} q'(t)F'(-t) \d t <+\infty.
\end{equation}
   	 If the function $f\in \Hol (D)$ with zero set $\Zero_f\supset  {\tt Z} $ satisfies the conditions
     	 \begin{equation}\label{h:2}
   	 |f|\overset{\eqref{h:}}{\leq} \exp \bigl(F\circ (-g_D)\bigr)\text{ on } D,
   	 \quad 	\int_{0}^{t_0} q'(t)s_{{\tt Z},D}(t) \d t=+\infty,
   	 \end{equation}
   	 where $s_{{\tt Z},D}(t)=\sigma_{2n-2}\bigl({\tt Z}\cap D_t\bigr)$, $t\in (0,t_0)$, 
   	 then $f= 0$ on $D$. 
    \end{theorem}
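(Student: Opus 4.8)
The plan is to derive the theorem from Theorem~A with the compact set $K:=\overline D_{t_0}$, the test function $v:=q\circ g_D$, and the subharmonic majorant $M:=F\circ(-g_D)$. By \eqref{Dt} the set $D_{t_0}$ is open and contains $x_0$, so $K$ is a compact subset of $D$ with $\Int K\neq\varnothing$; Proposition~\ref{pr_rf} gives $v\in\sbh_0^+(D\setminus\overline D_{t_0})$, and Proposition~\ref{pr_rF} gives $M\in\sbh_*(D)$. The first relation in \eqref{h:2} is precisely the hypothesis $|f|\leq e^M$ of Theorem~A. Hence it remains only to verify the two integral conditions of Theorem~A: the convergence \eqref{co:M} of $\int_{D\setminus K}v\,\d\nu_M$, and the divergence $\int_{{\tt Z}\setminus K}v\,\d\sigma_{2n-2}=+\infty$.

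For the convergence I would first compute the Riesz measure $\nu_M$ off the pole. Since $-g_D$ is harmonic on $D\setminus\{x_0\}$ while $F$ is convex and increasing, Green's identity on the shell $\{s<g_D<t\}$, whose closure is compact in $D\setminus\{x_0\}$, yields
\begin{equation*}
\nu_M\bigl(\{s<g_D<t\}\bigr)=c_{2n}\bigl(F'(-s)\,\Phi(s)-F'(-t)\,\Phi(t)\bigr),\qquad \Phi(t):=\int_{\partial D_t}|\nabla g_D|\,\d\sigma_{2n-1},
\end{equation*}
for $0<s<t<t_0$. The hard part, which I expect to be the main obstacle, is the fact that the flux $\Phi(t)$ is independent of $t$: using the $C^2$-regularity of the level surfaces and the identity $g_{D_t}(\cdot,x_0)=g_D(\cdot,x_0)-t$ on $D_t$, I would identify $c_{2n}\Phi(t)$ with the total mass $1$ of the harmonic measure $\omega_{D_t}(x_0,\cdot)$ (cf.\ Example~\ref{exg}), so that $c_{2n}\Phi(t)\equiv 1$. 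Consequently the push-forward of $\nu_M|_{D\setminus K}$ by $g_D$ is the measure $\lambda$ on $(0,t_0)$ with $\lambda\bigl((s,t)\bigr)=F'(-s)-F'(-t)$, and integrating by parts—where $q(0)=0$ kills the endpoint contribution at $t=0$—gives
\begin{equation*}
\int_{D\setminus K}v\,\d\nu_M=\int_{(0,t_0)}q\,\d\lambda=\int_0^{t_0}q'(t)F'(-t)\,\d t-\lim_{t\to t_0-}q(t)F'(-t).
\end{equation*}
The integral on the right is finite by \eqref{qFc} (which in particular forces $q(t_0-)<+\infty$, so the endpoint term at $t_0$ is finite as well), and this is exactly \eqref{co:M}.

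For the divergence I would use the layer-cake identity $q\bigl(g_D(z)\bigr)=\int_0^{t_0}q'(t)\,\mathbf 1_{\{g_D(z)>t\}}\,\d t$, valid for $z\in{\tt Z}\setminus K$ thanks to $q(0)=0$, together with Tonelli's theorem and the definition $s_{{\tt Z},D}(t)=\sigma_{2n-2}({\tt Z}\cap D_t)$. This yields
\begin{equation*}
\int_{{\tt Z}\setminus K}v\,\d\sigma_{2n-2}=\int_0^{t_0}q'(t)\bigl(s_{{\tt Z},D}(t)-s_{{\tt Z},D}(t_0)\bigr)\,\d t=\int_0^{t_0}q'(t)s_{{\tt Z},D}(t)\,\d t-s_{{\tt Z},D}(t_0)\,q(t_0-).
\end{equation*}
Here the subtracted term is finite while the first integral is $+\infty$ by the second relation in \eqref{h:2}, so the left-hand side is $+\infty$. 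With both conditions of Theorem~A established, the conclusion $f=0$ on $D$ follows.
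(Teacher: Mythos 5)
Your proposal is correct and takes essentially the same route as the paper: Theorem A applied with $K=\overline D_{t_0}$, $v=q\circ g_D$, $M=F\circ (-g_D)$, the level-shell mass identity $\nu_M (D_{t_2}\setminus \overline D_{t_1})=F'(-t_2)-F'(-t_1)$, and integration by parts to reduce \eqref{co:M} and the divergence condition to \eqref{qFc} and \eqref{h:2}. The only (harmless) deviations are that you derive the mass identity yourself via Green's identity and the harmonic-measure normalization $c_{2n}\Phi\equiv 1$, where the paper simply cites \cite[6.2.1]{KhT15}, and that you replace the Stieltjes integration by parts in \eqref{meqz} by an equivalent layer-cake/Tonelli computation.
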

  \begin{proof}Let $\nu_M$ be the Riesz measure of   $M:=F\circ (-g_D)\in \sbh(D)$, and 
  	$v:=q\circ g_D$. We have the following equalities:
  	\begin{subequations}\label{meq}
\begin{align}
\nu_M (D_{t_2}\setminus \overline D_{t_1})=F'(-t_2)-F'(-t_1), \quad -t_0<-t_1<-t_2<0 &\text{ \cite[6.2.1]{KhT15}},
\notag
\\
\int_{D\setminus \overline D_{t_0}} v\d \nu_M =\int_{D\setminus \overline  D_{t_0}} (q\circ g_D)\d \nu_M =\int_0^{t_0} q(t)\d \bigl(-F'(-t)\bigr),&
\tag{\ref{meq}M}\label{meqM}
\\
	\int_{{\tt Z}\setminus \overline D(t_0)}v\, {\rm d} \sigma_{2n-2}=
	\int_{{\tt Z}\setminus \overline D(t_0)}(q\circ g_D) \, {\rm d} \sigma_{2n-2}=
	\int_{0}^{t_0} q(t) \d s_{{\tt Z},D}(t). &
	\tag{\ref{meq}Z}\label{meqz}
	\end{align}	
  	\end{subequations}	
Next we apply the integration by parts to the right-hand sides of \eqref{meqM}--\eqref{meqz}
 and Theorem A with $K=\overline D_{t_0}$.
  	\end{proof} 
  	\begin{remark}
  	The conditions to the boundaries $\partial D$ and $\partial D_t$ can be considerably weakened \cite{Widman}.
  	In addition, if we replace the derivatives $q',F'$  by $q'_{\rightd}, F'_{\rightd}$ in \eqref{qFc} and by $q'_{\leftd}$
  	in \eqref{h:2} respectively,  we can remove the conditions $q\in C^1(0,t_0)$ and $F\in C^1(-\RR^{+}_*)$.
  	\end{remark}  

We will provide a more general and subtle results on the test functions and their construction elsewhere.

\noindent
 Bulat Nurmievich Khabibullin, Nargiza Rustamovna Tamindarova; \\
 Bashkir State University; E-mail: Khabib-Bulat@mail.ru

\end{document}